\documentclass{amsart}
\usepackage{amsmath}
\usepackage{amssymb}
\usepackage{amsthm}
\usepackage{color}
\usepackage{dsfont}
\usepackage{yhmath}
\usepackage{mathrsfs}
\usepackage{enumerate}

\newcommand{\eps}{\varepsilon}
\newtheorem{Def}{Definition}[section]
\newtheorem{Prop}{Proposition}[section]
\newcommand{\R}{\mathbb{R}}
\newcommand{\chap}{\widehat}
\newcommand{\daw}{\leqslant}

\newcommand{\lang}{\left\langle}
\newcommand{\rang}{\right\rangle}
\newcommand{\N}{\mathbb{N}}
\newcommand{\C}{\mathbb{C}}
\numberwithin{equation}{section}
\theoremstyle{plain}
\newtheorem{The}{Theorem}[section]
\newtheorem{lemme}{Lemma}[section]
\theoremstyle{definition}

\begin{document}
\title[The semi-classical singular Hartree equation]{\textbf{High-frequency averaging in the semi-classical singular Hartree equation}}
\author[Lounes MOUZAOUI]{Lounes MOUZAOUI}
\address{Univ. Montpellier~2\\Math\'ematiques, UMR 5149
\\CC~051\\34095 Montpellier\\ France}
\email{lounes.mouzaoui@univ-montp2.fr}
\date{\today}
\thanks{This work was supported by the French ANR project
  R.A.S. (ANR-08-JCJC-0124-01).}							

\begin{abstract} We study the asymptotic behavior of the Schr\"odinger equation in the
presence of a nonlinearity of Hartree type in the semi-classical regime. Our scaling corresponds
to a weakly nonlinear regime where the nonlinearity affects the leading order amplitude of the
solution without altering the rapid oscillations. We show the validity of the WKB-analysis when
the potential in the nonlinearity is singular around the origin. No new resonant wave is created in our model, this phenomenon is
inhibited due to the nonlinearity. The nonlocal nature of this latter leads us to build our result on a high-frequency averaging effects. In the proof we make use of the Wiener algebra and the space of square-integrable functions.
\end{abstract}
\maketitle

\section{Introduction}
In this paper we are interested in studying of the Schr\"odinger equation
in the presence of a \emph{Hartree type nonlinearity}

\begin{equation}
\label{l'equa2}
i\eps\partial_{t} u^{\eps} +
\frac{\eps^{2}}{2} \Delta u^{\eps} =
\eps\lambda (K*{|u^{\eps}|}^{2})u^{\eps},
\end{equation}
with $\eps >0$, $u^{\eps}\in \mathcal{S}'(I \times \mathbb{R}^{d},
\mathbb{C}), d \in \{1,2,3\}, I $ interval of $\mathbb{R}$, $\lambda \in \R, \mbox{and } K(x) = \frac{1}{|x|^{\gamma}}$
with $0<\gamma<d$. This model
is of physical application.
It appears in many physical phenomena like in describing superfluids (see \cite{Berloff99}).
The presence of $\eps$ in this equation is to show the microscopic/macroscopic scale ratio.	For small $\eps >0$, the scaling
of $\eqref{l'equa2}$ corresponds to the \textit{semi-classical}
regime, i.e. the regime of \textit{high-frequency}
solutions $u^{\eps}(t,x)$. This system contains the Schr\"odinger-Poisson system ($d=3, \gamma=1$),
a model which is used in studying the quantum transport in semi-conductor devices (see \cite{MR1387456, semi-cond-ref, Sconducteur2}). Many authors have interested to study the asymptotic behavior of \eqref{l'equa2}
in this case like in \cite{{MR1251718}, MR1203274, art18} where it is made use of Wigner measure techniques.
Several papers like \cite{MR2290279, art7}  are devoted to study the strong asymptotic limit of the solutions of \eqref{l'equa2} with Hartree nonlinearity in the case of a \emph{single-phase} WKB initial data in the form
\[
u^{\eps}_{0}(x)=\alpha^{\eps}(x)e^{i\varphi(x)/\eps},
\]
with $\varphi(x)\in \C$  $\eps$-independent and $\alpha^{\eps}(x) \in \R.$

Due to the small parameter $\eps$ in
front of the nonlinearity, we consider a
\textit{weakly nonlinear regime.} This means that the nonlinearity
does not affect the geometry of the propagation (see Section 3), and technically
means that it does not show up in the eikonal equation, but only in the transport
equations determining the amplitudes.
The object of this paper is to construct an
approximate solution $u^{\varepsilon}_{app}$ for the exact solution
$u^{\varepsilon}$ of \eqref{l'equa2} subject to an initial data of WKB type, given by a superposition of
$\eps$-oscillatory plane waves, i.e.

\begin{equation}
\label{cond_init}
\left.u^{\eps}_{app}\right|_{t=0}(x)=u^{\eps}_{0}(x)=\sum_{j\in \N} \alpha_{j}(x)e^{i\kappa_{j}\cdot x/\varepsilon}.
\end{equation}

We begin the paper by showing the existence of an exact solution to \eqref{l'equa2} in $L^{2}(\R^{d})\cap W(\R^{d})$ where
$W(\R^{d})$ denotes the \emph{Wiener algebra}
\[
W(\R^{d}) = \mathcal{F}(L^{1}(\R^{d}))=\lbrace f \in \mathcal{S}'(\R^{d},\C), \|f\|_{W}:=\|\chap{f}\|_{L^{1}(\R^{d})} < +\infty\rbrace,
\]
and where
\[
(\mathcal{F}f)(\xi)=\chap{f}(\xi)=\frac{1}{(2\pi)^{d/2}} \int_{\R^{d}} f(x)e^{-ix\cdot \xi} \, \mathrm{d}x.
\]
In the next step,
we construct an approximate	solution in the form
\[
u^{\eps}_{app}(t,x) = \sum_{j\in \N} a_{j}(t,x)e^{i\phi_{j}(t,x)/\eps},
\]
in $L^{2}(\R^{d})\cap W(\R^{d})$
by determining the
amplitudes $a_{j}$ and the phases $\phi_{j}$,
then we proceed to study its stability to justify our
construction.

Fourier transform of the potential $K$ is found to be
\[
\chap{K}(\xi)=\frac{C_{d,\gamma}}{|\xi|^{d-\gamma}},
\]
(see [3, Proposition 1.29]) so under the general assumptions of \cite{MR2731651} where the considered kernel $K$ is such that
$(1+|\xi|)\chap{K}(\xi) \in L^{\infty}(\R^{d})$ we can not deduce the well posedness of \eqref{l'equa2}.
It is a critical case for our model because we do not know how to construct an exact local solution in $W(\R^{d})$
due to the singularity of
$\chap{K}$ around the origin $(\chap{K} \notin L^{\infty})$.

To define the framework of the amplitudes $(a_{j})_{j\in \N}$
we introduce the following definitions.
\begin{Def}
\label{def_espace}
For $d \in \lbrace 1,2,3 \rbrace$ and $0< \gamma < d  $ we define $n\in\N$ as follows
\[
n = \left\{
\begin{array}{ll}
2 & \mbox{if }\, d=1 \mbox{ or } 2 \\
2 & \mbox{if }\, d=3 \mbox{ and } \gamma \in [1,3[\\
3 & \mbox{if }\, d=3 \mbox{ and } \gamma \in ]0,1[.\\
\end{array}
\right.
\]
We define the space
\[
Y(\R^{d}):=\lbrace f \in L^{2}\cap W, \partial^{\eta}f \in L^{2}\cap W,
\forall \eta \in \N^{d}, |\eta| \daw n\rbrace,
\]
equipped with the norm
\[
\|f\|_{Y(\R^{d})} = \sum_{0\daw |\eta| \daw n} \|\partial^{\eta}f\|_{L^{2}\cap W}.
\]
We set
\[
E(\R^{d})=\lbrace a=(a_{j})_{j\in \N}\; | \;(a_{j})_{j\in \N} \in \ell^{1}(\N, Y)\rbrace,
\]
which is a Banach space when it is equipped with the norm
\[
\|a\|_{E(\R^{d})}=\sum_{j\in \N} \|a_{j}\|_{Y}.
\]

\end{Def}

\begin{flushleft}
Now, we can state the main theorem of this work.
\end{flushleft}	
\begin{The}
\label{mainTh}
For $d \in \lbrace 1,2,3\rbrace$ and $0<\gamma <d$, consider the Cauchy problem \eqref{l'equa2},
subject to initial data $u^{\eps}_{0}$ of the form \eqref{cond_init}, with initial amplitudes
$(\alpha_{j})_{j\in \N} \in E(\R^{d})$. We assume that there
exists $\delta >0$ such that

\[
\inf\lbrace|\kappa_{k}-\kappa_{m}|, k,m
\in \N, k \neq m\rbrace \geqslant \delta > 0.
\]

Then, for all $T >0$ there exists $C, \eps_{0}(T) >0$, such that for any $ \eps \in ]0, \eps_{0}]$,
the exact solution to \eqref{l'equa2} satisfies $u^{\eps} \in L^{\infty}([0,T];L^{2}\cap W)$ and can be approximated by

\[
\|u^{\eps}- u^{\eps}_{app}\|_{L^{\infty}([0,T];L^{2}\cap W)} \daw C\eps^{\beta},
\]
where $ \beta=\min\lbrace 1,d-\gamma \rbrace $ and where $u^{\eps}_{app}$ is defined by
\[
u^{\eps}_{app}(t,x)=\sum_{j\in \N} \alpha_{j}(x-t\kappa_{j})e^{iS_{j}(t,x)}e^{i\kappa_{j}\cdot x /\eps - i|\kappa_{j}|^{2}/ 2\eps},
\]
with $S_{j} \in \R$ defined in \eqref{ampli_puissance}.

\end{The}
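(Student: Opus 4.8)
The plan is to first prove local well-posedness of \eqref{l'equa2} in $L^{2}\cap W$ on a time interval independent of $\eps$, then to build $u^{\eps}_{app}$ via the WKB hierarchy — which here collapses to closed formulas — and finally to estimate the consistency error of $u^{\eps}_{app}$ by a high-frequency averaging lemma and to close the argument by a Gronwall/bootstrap scheme. For the local theory I would use three facts: $W(\R^{d})$ is a Banach algebra embedding continuously into $L^{\infty}$; the free propagator $e^{i\eps t\Delta/2}$ is a Fourier multiplier of modulus one, hence an isometry of both $L^{2}$ and $W$ uniformly in $\eps,t$; and, splitting $\widehat{K}$ at $|\xi|=1$ (near the origin $\widehat{K}\in L^{1}_{loc}$ because $\gamma>0$, away from it $\widehat{K}\in L^{\infty}$), one has $\|K*h\|_{L^{2}\cap W}\lesssim\|h\|_{L^{1}}+\|h\|_{L^{2}\cap W}$, so that $u\mapsto(K*|u|^{2})u$ is locally Lipschitz on $L^{2}\cap W$. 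A contraction on the Duhamel formulation then gives a local solution whose lifespan is controlled by $\|u^{\eps}_{0}\|_{L^{2}\cap W}\daw\|\alpha\|_{E}$ alone — uniformly in $\eps$ — together with the blow-up alternative that $u^{\eps}$ can fail to continue only if $\|u^{\eps}(t)\|_{L^{2}\cap W}$ becomes unbounded.

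Next I construct $u^{\eps}_{app}$. Inserting $\sum_{j}a_{j}e^{i\phi_{j}/\eps}$ into \eqref{l'equa2} and cancelling powers of $\eps$ gives, at order $\eps^{0}$, the eikonal equations $\partial_{t}\phi_{j}+\tfrac12|\nabla\phi_{j}|^{2}=0$ with $\phi_{j}|_{t=0}=\kappa_{j}\cdot x$, whose solutions are the linear phases $\phi_{j}=\kappa_{j}\cdot x-\tfrac t2|\kappa_{j}|^{2}$; and, at order $\eps^{1}$, the transport equations $(\partial_{t}+\kappa_{j}\cdot\nabla)a_{j}=-i\lambda\big(\sum_{k}K*|a_{k}|^{2}\big)a_{j}$ with $a_{j}|_{t=0}=\alpha_{j}$, in which only the diagonal part of $K*|u^{\eps}_{app}|^{2}$ appears — the off-diagonal part being absorbed into the remainder, as the averaging lemma below will justify. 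The crucial point is that $\partial_{t}+\kappa_{j}\cdot\nabla$ leaves $|a_{j}|$ invariant, so $|a_{j}(t,x)|^{2}=|\alpha_{j}(x-t\kappa_{j})|^{2}$ and $\sum_{k}K*|a_{k}|^{2}=\sum_{k}(K*|\alpha_{k}|^{2})(x-t\kappa_{k})$ is explicit; integrating it along characteristics yields the real phase $S_{j}$ of \eqref{ampli_puissance} and $a_{j}(t,x)=\alpha_{j}(x-t\kappa_{j})e^{iS_{j}(t,x)}$. Since each $K*|\alpha_{k}|^{2}$, and its derivatives up to order $n$, belong to $W$ (Banach-algebra property plus the splitting of $\widehat{K}$), $S_{j}$ is bounded in $W$ uniformly on $[0,T]$, hence $e^{iS_{j}}-1\in W$; together with $(\alpha_{j})\in E$ and the fact that $Y(\R^{d})$ is an algebra, this gives $(a_{j})\in C([0,T];E)$ and $u^{\eps}_{app}\in L^{\infty}([0,T];L^{2}\cap W)$ with norm depending only on $T$ and $\|\alpha\|_{E}$.

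Writing $R^{\eps}:=i\eps\partial_{t}u^{\eps}_{app}+\tfrac{\eps^{2}}2\Delta u^{\eps}_{app}-\eps\lambda(K*|u^{\eps}_{app}|^{2})u^{\eps}_{app}$, the cancellations leave
\[
R^{\eps}=\frac{\eps^{2}}2\sum_{j}(\Delta a_{j})e^{i\phi_{j}/\eps}-\eps\lambda\Big(\sum_{k\neq l}K*\big(a_{k}\bar a_{l}\,e^{i(\phi_{k}-\phi_{l})/\eps}\big)\Big)u^{\eps}_{app},
\]
the first sum being $O(\eps^{2})$ in $L^{2}\cap W$ since $\Delta a_{j}\in L^{2}\cap W$ (this uses $n\geq2$). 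For the second I would establish the averaging lemma: \emph{for $g\in Y(\R^{d})$ and $|\mu|\geq\delta$, one has $\|K*(g\,e^{i\mu\cdot x/\eps})\|_{W}\leq C_{\delta}\,\eps^{d-\gamma}\|g\|_{Y}$.} Since $\widehat{K*(g\,e^{i\mu\cdot x/\eps})}(\xi)=c\,\widehat{K}(\xi)\,\widehat{g}(\xi-\mu/\eps)$, I split $\{|\xi|\daw\delta/(2\eps)\}$, where $|\mu/\eps|\geq\delta/\eps$ forces $\widehat{K}(\xi)\lesssim(\eps/\delta)^{d-\gamma}$ so this part is $\lesssim\eps^{d-\gamma}\|g\|_{W}$; and $\{|\xi|>\delta/(2\eps)\}$, where $\widehat{g}(\cdot-\mu/\eps)$ lives at frequencies $\gtrsim\delta/\eps$, so $\int_{|\zeta|>\delta/(2\eps)}|\widehat{g}(\zeta)|\,d\zeta\lesssim\eps^{\,n}\|g\|_{Y}$ by the bound $|\zeta|^{n}\widehat{g}\in L^{1}$ built into $Y$, while the local singularity $\int_{|w|\daw1}|w|^{-(d-\gamma)}dw<\infty$ is damped by the pointwise smallness of $\widehat{g}$ near $\zeta=-\mu/\eps$. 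This is exactly why the value of $n$ in Definition~\ref{def_espace} is what it is: $n$ is taken just large enough that $n\geq d-\gamma$, so that the high-frequency contribution $\eps^{n}$ does not spoil the $\eps^{d-\gamma}$ rate. Applying this to each $g=a_{k}\bar a_{l}\in Y$ with $\mu=\kappa_{k}-\kappa_{l}$ and summing via $\sum_{k\neq l}\|a_{k}\|_{Y}\|a_{l}\|_{Y}\daw\|a\|_{E}^{2}$ gives $\big\|\sum_{k\neq l}K*(\cdots)\big\|_{W}\lesssim\eps^{d-\gamma}$, whence $\|R^{\eps}\|_{L^{\infty}([0,T];L^{2}\cap W)}\lesssim\eps^{\min\{2,\,1+d-\gamma\}}=\eps^{1+\beta}$.

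Finally, $w^{\eps}:=u^{\eps}-u^{\eps}_{app}$ solves $i\partial_{t}w^{\eps}+\tfrac\eps2\Delta w^{\eps}=\lambda[(K*|u^{\eps}|^{2})u^{\eps}-(K*|u^{\eps}_{app}|^{2})u^{\eps}_{app}]-\tfrac1\eps R^{\eps}$ with $w^{\eps}|_{t=0}=0$, and in Duhamel form, using the isometry property and the local Lipschitz bound for the Hartree term,
\[
\|w^{\eps}(t)\|_{L^{2}\cap W}\daw\frac{CT}{\eps}\sup_{[0,T]}\|R^{\eps}\|_{L^{2}\cap W}+C\int_{0}^{t}\big(1+\|u^{\eps}\|_{L^{2}\cap W}^{2}+\|u^{\eps}_{app}\|_{L^{2}\cap W}^{2}\big)\|w^{\eps}\|_{L^{2}\cap W}\,ds.
\]
A bootstrap then finishes the proof: as long as $\|w^{\eps}(t)\|_{L^{2}\cap W}\daw1$ one has $\|u^{\eps}(t)\|_{L^{2}\cap W}\daw M(T,\|\alpha\|_{E})$, Gronwall gives $\|w^{\eps}(t)\|_{L^{2}\cap W}\daw C(T)\eps^{\beta}$, and for $\eps\daw\eps_{0}(T)$ this is $<1$, strictly improving the hypothesis; by continuity it therefore holds on all of $[0,T]$, and the resulting uniform bound on $\|u^{\eps}\|_{L^{2}\cap W}$, combined with the blow-up alternative, yields $u^{\eps}\in L^{\infty}([0,T];L^{2}\cap W)$. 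I expect the main obstacle to be the averaging lemma of the third step: because $\widehat{K}$ is unbounded near the origin, the nonlocal term cannot be handled by an off-the-shelf non-stationary-phase argument, and one must carefully weigh the singularity of $\widehat{K}$ against the decay of $\widehat{g}$ — which is precisely what dictates the choice of the space $Y$ and of the integer $n$.
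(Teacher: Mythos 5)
Your proposal is correct and follows essentially the same route as the paper: local well-posedness in $L^{2}\cap W$ via the Wiener-algebra contraction with $\chap{K}$ split at $|\xi|=1$, linear eikonal phases with explicitly transported amplitudes $a_{j}=\alpha_{j}(x-t\kappa_{j})e^{iS_{j}}$, the key averaging estimate $\|K*(g\,e^{i\mu\cdot x/\eps})\|_{W}\daw C_{\delta}\eps^{d-\gamma}\|g\|_{Y}$ (which the paper obtains by inserting Peetre weights $\lang\xi-\mu/\eps\rang^{\pm(d-\gamma)}$ rather than by your explicit splitting at $|\xi|=\delta/(2\eps)$ — the same mechanism), and the concluding Gronwall bootstrap. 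One slip to fix: in your averaging lemma the two regions' roles are interchanged as written — the bound $\chap{K}(\xi)\lesssim(\eps/\delta)^{d-\gamma}$ holds on $\lbrace|\xi|>\delta/(2\eps)\rbrace$, whereas it is on $\lbrace|\xi|\daw\delta/(2\eps)\rbrace$ that $\chap{g}(\xi-\mu/\eps)$ is evaluated at frequencies $\geqslant\delta/(2\eps)$ and the integrable singularity of $\chap{K}$ at the origin must be paired with the decay $\eps^{n}$, $n\geqslant d-\gamma$, supplied by $Y$; once the labels are exchanged your argument is exactly the paper's.
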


In particular, for $d=3$ and $\gamma=1$ we obtain an approximation result for Schr\"odinger-Poisson equation with $\beta=1$.
This confirms in a general way what was guessed in \cite{{2010arXiv1006.4701C}} for the Schr\"odinger-Poisson system.
An important feature of our justification consists in the absence of phenomena
of phase resonances like we can see later.
\section{Existence and uniqueness}

\begin{flushleft}
We consider the following Cauchy problem

\begin{equation}
\label{eq:(SP')}
i\partial_{t} u +
\frac{1}{2}\Delta u =
\lambda (K*{|u|}^{2})u,
\end{equation}
subject to an initial data $u_{0} \in L^{2}(\R^{d}) \cap W(\R^{d})$,
with $K(x)= \frac{1}{|x|^{\gamma}}, 0<\gamma <d$. For $0 <\gamma<d$, the Fourier transform of $K$ for $\xi \in \R^{d}$ is
\[
\chap{K}(\xi)= \frac{C_{d,\gamma}}{|\xi|^{d-\gamma}}.
\]
In the following we denote $\mathcal{K}_{1}= \chap{K}(\xi) \mathds{1}_{[|\xi| \daw 1]} $
and $\mathcal{K}_{2}= \chap{K}(\xi) \mathds{1}_{[|\xi| > 1]}$, so, $\chap{K}=\mathcal{K}_{1}+\mathcal{K}_{2}$
with $\mathcal{K}_{1} \in L^{p}(\R^{d})$ for all $p\in [1,\frac{d}{d-\gamma}[$ and
$\mathcal{K}_{2} \in L^{q}(\R^{d})$ for all $q \in ]\frac{d}{d-\gamma}, +\infty]$. Let $g(u)=(K*|u|^{2})u$. For
$\eps >0$ we set
\[
U^{\eps}(t)=e^{i\eps\frac{t}{2} \Delta},
\]
with $U(t):=U^{1}(t)$. We recall some important properties of $W(\R^{d}).$

\begin{lemme}
\label{W_algebra}
Wiener algebra space $W(\R^{d})$ enjoys the following properties (see \cite{MR2607351, MR2515782}):
\begin{enumerate}[i.]
\item $W(\R^{d})$ is a Banach space, continuously embedded into $L^{\infty}(\R^{d})$.

\item $W(\R^{d})$ is an algebra, in the sense that the mapping $ (f,g)\mapsto fg$ is
continuous from $W(\R^{d})^{2}$ to $ W(\R^{d})$, and moreover
\[
\forall f,g \in W(\R^{d}), \; \|fg\|_{W} \daw \|f\|_{W}\|g\|_{W}.
\]

\item For all $t \in \R$, $U^{\eps}(t)$ is unitary on $W(\R^{d})$.

\item For all $s > \frac{d}{2}$ there exists a positive constant $C(s,d)$
such that for all $f\in H^{s}(\R^{d}) $
\[
\|f\|_{W} \daw C(s,d) \|f\|_{H^{s}}.
\]

\end{enumerate}
\end{lemme}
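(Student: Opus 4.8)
The four assertions all become elementary once transported to the Fourier side, since by the very definition of the norm the Fourier transform $\mathcal{F}:W(\R^d)\to L^1(\R^d)$, $f\mapsto \chap f$, is a linear isometry. The plan for (i) is to observe that $\mathcal{F}$ is in fact onto: given $h\in L^1$, the tempered distribution $\mathcal{F}^{-1}h$ lies in $W$ and satisfies $\mathcal{F}(\mathcal{F}^{-1}h)=h$, so $\mathcal{F}$ is an isometric isomorphism onto the complete space $L^1(\R^d)$; completeness of $L^1$ then transfers to $W$, which is therefore Banach. The continuous embedding into $L^\infty$ follows from the Fourier inversion formula: for $f\in W$ one has $f=\mathcal{F}^{-1}\chap f$, whence
\[
\|f\|_{L^\infty}\daw \frac{1}{(2\pi)^{d/2}}\|\chap f\|_{L^1}=\frac{1}{(2\pi)^{d/2}}\|f\|_W .
\]
For (iii), the free propagator acts on the Fourier side as the multiplier $\widehat{U^\eps(t)f}(\xi)=e^{-i\eps t|\xi|^2/2}\chap f(\xi)$; since $|e^{-i\eps t|\xi|^2/2}|=1$ pointwise, $\|U^\eps(t)f\|_W=\|\chap f\|_{L^1}=\|f\|_W$, and as $U^\eps(t)^{-1}=U^\eps(-t)$ this isometry is onto, i.e. a bijective isometry of $W$.

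For the algebra property (ii) I would use the convolution theorem, which in the present normalization reads $\widehat{fg}=(2\pi)^{-d/2}\,\chap f\ast \chap g$. Applying Young's inequality $\|u\ast v\|_{L^1}\daw \|u\|_{L^1}\|v\|_{L^1}$ gives
\[
\|fg\|_W=\frac{1}{(2\pi)^{d/2}}\|\chap f\ast\chap g\|_{L^1}\daw \frac{1}{(2\pi)^{d/2}}\|\chap f\|_{L^1}\|\chap g\|_{L^1}=\frac{1}{(2\pi)^{d/2}}\|f\|_W\|g\|_W,
\]
and since $(2\pi)^{-d/2}\daw 1$ for every $d\geqslant 1$ the stated inequality $\|fg\|_W\daw \|f\|_W\|g\|_W$ follows at once; bilinearity together with this bound yields continuity of the product on $W(\R^d)^2$.

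Property (iv) is the only quantitative point. The plan is to insert the weight $(1+|\xi|^2)^{s/2}$ and apply the Cauchy--Schwarz inequality in $L^2_\xi$:
\[
\|f\|_W=\int_{\R^d}|\chap f(\xi)|\,\mathrm{d}\xi=\int_{\R^d}(1+|\xi|^2)^{-s/2}(1+|\xi|^2)^{s/2}|\chap f(\xi)|\,\mathrm{d}\xi\daw \left(\int_{\R^d}(1+|\xi|^2)^{-s}\,\mathrm{d}\xi\right)^{1/2}\|f\|_{H^s}.
\]
The right-hand factor is precisely $\|f\|_{H^s}$, while the remaining integral $\int_{\R^d}(1+|\xi|^2)^{-s}\,\mathrm{d}\xi$ converges if and only if $2s>d$; this is exactly the hypothesis $s>\tfrac{d}{2}$, under which the integral equals a finite constant $C(s,d)^2$, yielding $\|f\|_W\daw C(s,d)\|f\|_{H^s}$. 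The main (and essentially only) subtlety is this integrability threshold: the weight $(1+|\xi|^2)^{-s}$ is integrable at infinity precisely when $s>\tfrac{d}{2}$, which is why the embedding fails at the endpoint $s=\tfrac{d}{2}$ and is the reason the supercritical $H^s$-scale is the natural one to control the Wiener norm throughout the paper.
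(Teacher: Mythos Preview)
Your proof is correct in every part; each of (i)--(iv) is handled by the standard Fourier-side argument, and the constants and thresholds are tracked accurately. The paper itself does not prove this lemma but merely cites \cite{MR2607351, MR2515782}, so there is no approach to compare against; your arguments are precisely the classical ones one would find behind those references.
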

The following theorem ensures
the existence and uniqueness of the solution to \eqref{eq:(SP')}:

\begin{The}
\label{exist_uniq}
We consider the above initial value problem \eqref{eq:(SP')}
with $u_{0} \in L^{2}(\R^{d}) \cap W(\R^{d})$. Then there exists $T >0$ depending on $\|u_{0}\|_{L^{2}\cap W}$ and a unique
solution
$u \in C([0,T]; L^{2}(\R^{d})\cap W(\R^{d}))$ to \eqref{eq:(SP')}.
\end{The}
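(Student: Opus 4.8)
The plan is to run a standard fixed-point argument for the integral (Duhamel) formulation of \eqref{eq:(SP')}, working in the complete metric space
\[
X_{T,R}=\bigl\{u\in C([0,T];L^{2}\cap W)\ :\ \|u\|_{L^{\infty}([0,T];L^{2}\cap W)}\daw R\bigr\},
\]
with $R=2\|u_{0}\|_{L^{2}\cap W}$, equipped with the distance induced by the $L^{\infty}([0,T];L^{2}\cap W)$ norm. Define the map
\[
\Phi(u)(t)=U(t)u_{0}-i\lambda\int_{0}^{t}U(t-s)\,g(u(s))\,\mathrm{d}s,\qquad g(u)=(K*|u|^{2})u .
\]
By Lemma \ref{W_algebra}, $U(t)=U^{1}(t)$ is unitary on $W$, and it is of course unitary on $L^{2}$, so the free part $U(t)u_{0}$ stays in $L^{2}\cap W$ with constant norm. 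The whole argument therefore reduces to two nonlinear estimates: a bound on $\|g(u)\|_{L^{2}\cap W}$ in terms of $\|u\|_{L^{2}\cap W}$, and the corresponding Lipschitz (difference) estimate on $g(u)-g(v)$. Once these are in hand, choosing $T$ small depending on $R$ (hence on $\|u_{0}\|_{L^{2}\cap W}$) makes $\Phi$ a contraction of $X_{T,R}$ into itself, and the Banach fixed-point theorem yields the unique solution; continuity in time of the Duhamel term is routine from strong continuity of $U(\cdot)$ and the time-integrability of $g(u(\cdot))$.

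The heart of the matter is the estimate on the Hartree term $g(u)=(K*|u|^{2})u$, and this is where the singularity of $\chap{K}$ at the origin — the whole point flagged in the introduction — has to be handled. I would split $K*|u|^{2}$ using the decomposition $\chap{K}=\mathcal{K}_{1}+\mathcal{K}_{2}$ already introduced in the text, writing $K*|u|^{2}=\mathcal{F}^{-1}\bigl(\mathcal{K}_{1}\widehat{|u|^{2}}\bigr)+\mathcal{F}^{-1}\bigl(\mathcal{K}_{2}\widehat{|u|^{2}}\bigr)$. For the high-frequency piece $\mathcal{K}_{2}\in L^{q}$ for every $q\in\,]\tfrac{d}{d-\gamma},+\infty]$ (in particular $q=\infty$ is excluded but any large finite $q$ works), one controls $\mathcal{F}^{-1}(\mathcal{K}_{2}\widehat{|u|^{2}})$ in $W$ via $\|\mathcal{K}_{2}\widehat{|u|^{2}}\|_{L^{1}}\daw\|\mathcal{K}_{2}\|_{L^{q'}}$-type bounds only if $q'$ is finite, so it is cleaner to estimate the convolution directly in physical space: $\mathcal{F}^{-1}\mathcal{K}_{2}$ is a bounded-plus-integrable kernel away from the origin so convolving with $|u|^{2}\in L^{1}\cap L^{\infty}$ (using $W\hookrightarrow L^{\infty}$ and $L^{2}\cdot L^{2}\subset L^{1}$) lands in $W$, and for the singular low-frequency piece $\mathcal{K}_{1}\in L^{p}$ with $p<\tfrac{d}{d-\gamma}$, since $d-\gamma$ can be $\geq 1$ or $<1$, one exploits that $\widehat{|u|^{2}}\in L^{\infty}\cap L^{2}$ (it is $W$, hence $L^{\infty}$, and $|u|^{2}\in L^{2}$ via $L^{2}\cdot L^{\infty}$) so that $\mathcal{K}_{1}\widehat{|u|^{2}}\in L^{1}$ by Hölder, giving $K*|u|^{2}\in W$; the $L^{\infty}$ bound from $W\hookrightarrow L^{\infty}$ then lets one conclude $g(u)=(K*|u|^{2})u\in W$ by the Banach-algebra property and $g(u)\in L^{2}$ by $L^{\infty}\cdot L^{2}$. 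Schematically one arrives at $\|g(u)\|_{L^{2}\cap W}\leq C\|u\|_{L^{2}\cap W}^{3}$.

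The difference estimate is obtained the same way after writing
\[
g(u)-g(v)=\bigl(K*(|u|^{2}-|v|^{2})\bigr)u+\bigl(K*|v|^{2}\bigr)(u-v),
\]
and using $|u|^{2}-|v|^{2}=\operatorname{Re}\bigl((u-v)\bar u\bigr)+\operatorname{Re}\bigl(v\overline{(u-v)}\bigr)$ so that each factor carries one power of $u-v$; the same convolution bounds give
\[
\|g(u)-g(v)\|_{L^{2}\cap W}\leq C\bigl(\|u\|_{L^{2}\cap W}+\|v\|_{L^{2}\cap W}\bigr)^{2}\,\|u-v\|_{L^{2}\cap W}.
\]
Combining with $\|\Phi(u)(t)-\Phi(v)(t)\|_{L^{2}\cap W}\leq|\lambda|\int_{0}^{t}\|g(u(s))-g(v(s))\|_{L^{2}\cap W}\,\mathrm{d}s\leq C|\lambda|\,T\,R^{2}\,\|u-v\|_{L^{\infty}([0,T];L^{2}\cap W)}$, and similarly $\|\Phi(u)\|_{L^{\infty}([0,T];L^{2}\cap W)}\leq\|u_{0}\|_{L^{2}\cap W}+C|\lambda|\,T\,R^{3}$, one picks $T=T(\|u_{0}\|_{L^{2}\cap W})$ small enough that $C|\lambda|TR^{2}<\tfrac12$ and $C|\lambda|TR^{3}<R/2$; then $\Phi:X_{T,R}\to X_{T,R}$ is a $\tfrac12$-contraction and we get the unique fixed point in $X_{T,R}$. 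Uniqueness in the full space $C([0,T];L^{2}\cap W)$ (not just in the ball) follows from a Gronwall argument on $\|u-v\|_{L^{2}\cap W}$ using the local Lipschitz bound. The one genuinely delicate point, as anticipated, is making the low-frequency convolution estimate for $\mathcal{K}_{1}$ work uniformly over the full parameter range $0<\gamma<d$, $d\in\{1,2,3\}$ — in particular when $d-\gamma<1$ so that $\chap{K}$ is more singular — which is precisely why one must use $\widehat{|u|^{2}}\in L^{\infty}$, i.e. the Wiener-algebra control of $|u|^{2}$, rather than only $L^{2}$-based Sobolev estimates.
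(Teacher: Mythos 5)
Your overall strategy is the same as the paper's: Duhamel formulation, contraction mapping on the ball of radius $2\|u_{0}\|_{L^{2}\cap W}$ in $C([0,T];L^{2}\cap W)$, with the key nonlinear estimate obtained by splitting $\chap{K}=\mathcal{K}_{1}+\mathcal{K}_{2}$ and estimating $\|K*|u|^{2}\|_{W}=C\|\chap{K}\,\widehat{|u|^{2}}\|_{L^{1}}$ by pairing the integrable low-frequency piece $\mathcal{K}_{1}\in L^{1}$ with $\widehat{|u|^{2}}\in L^{\infty}$ (equivalently $|u|^{2}\in L^{1}$, i.e.\ $u\in L^{2}$) and the high-frequency piece $\mathcal{K}_{2}$ with $\widehat{|u|^{2}}\in L^{1}$ (i.e.\ $|u|^{2}\in W$). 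The difference splitting of $g(u)-g(v)$ and the choice of $T$ are also as in the paper, which packages these estimates as Lemma~\ref{lemm_W}.

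One step of your argument is wrong as written, although it is easily repaired. You assert that $q=\infty$ is excluded for $\mathcal{K}_{2}$ and therefore retreat to a physical-space estimate of the convolution. In fact $\mathcal{K}_{2}(\xi)=C_{d,\gamma}|\xi|^{\gamma-d}\mathds{1}_{[|\xi|>1]}$ is bounded, since $\gamma-d<0$ and $|\xi|>1$ on its support (the interval $]\tfrac{d}{d-\gamma},+\infty]$ is closed at $+\infty$), so the direct bound $\|\mathcal{K}_{2}\widehat{|u|^{2}}\|_{L^{1}}\leq\|\mathcal{K}_{2}\|_{L^{\infty}}\|\widehat{|u|^{2}}\|_{L^{1}}=\|\mathcal{K}_{2}\|_{L^{\infty}}\||u|^{2}\|_{W}$ is available and is exactly what the paper uses. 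Your substitute does not deliver the conclusion: knowing that $\mathcal{F}^{-1}\mathcal{K}_{2}$ is ``bounded plus integrable'' and that $|u|^{2}\in L^{1}\cap L^{\infty}$ only places the convolution in $L^{1}+L^{\infty}$, whereas membership in $W$ is a statement about the $L^{1}$ norm of the Fourier transform, which such physical-space information does not control; one must argue on the Fourier side. Relatedly, your closing remark inverts the roles of the two function spaces: $\widehat{|u|^{2}}\in L^{\infty}$ is the $L^{2}$-control of $u$ (via $|u|^{2}\in L^{1}$), while the Wiener-algebra control of $u$ is what gives $\widehat{|u|^{2}}\in L^{1}$ and handles $\mathcal{K}_{2}$. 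With these corrections your proof coincides with the paper's.
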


\begin{proof}

The following lemma will be useful to prove the above theorem :

\begin{lemme}
\label{lemm_W}
Let $h \in L^{1}\cap W, f_{1}, f_{2} \in L^{2}\cap W $ and $K$ like defined as above. Then

\begin{equation}
\label{ineg4}
\|K*h\|_{W} \daw \|\mathcal{K}_{1}\|_{L^{1}} \|h\|_{L^{1}} + \|\mathcal{K}_{2}\|_{L^{\infty}} \|h\|_{W},
\end{equation}

in particular we have
\[
\label{inge4}
\|g(f_{1})-g(f_{2})\|_{L^{2}\cap W} \daw C (\|f_{1}\|^{2}_{L^{2}\cap W}+
\|f_{2}\|^{2}_{L^{2}\cap W})\|f_{1}-f_{2}\|_{L^{2}\cap W},
\]
for some positive constant $C$.

\end{lemme}
\begin{proof}
By definition of the $W$ norm :
\[
\begin{aligned}
\|K*h\|_{W} &=C\|(\mathcal{K}_{1}+\mathcal{K}_{2}) \chap{h}\|_{L^{1}}\\
& \daw C\|\mathcal{K}_{1}\|_{L^{1}} \|\widehat{h}\|_{L^{\infty}} + C\|\mathcal{K}_{2}\|_{L^{\infty}} \|\widehat{h}\|_{L^{1}}\\
&\daw C\|\mathcal{K}_{1}\|_{L^{1}} \|h\|_{L^{1}} + C\|\mathcal{K}_{2}\|_{L^{\infty}} \|h\|_{W}.
\end{aligned}
\]
Moreover, for $f_{1}, f_{2} \in L^{2}\cap W$
\[
\|g(f_{1})-g(f_{2})\|_{L^{2}\cap W}= \phantom{\|(K*(|f_{1}|^{2}-|f_{2}|^{2}))f_{1}\|_{L^{2}\cap W}
+\|(K*|f_{2}|^{2})(f_{1}-f_{2})\|_{L^{2}\cap W}.}
\]
\[
\begin{aligned}
&=\|(K*(|f_{1}|^{2}-|f_{2}|^{2}))f_{1}+(K*|f_{2}|^{2})(f_{1}-f_{2})\|_{L^{2}\cap W}\\
&\daw\|(K*(|f_{1}|^{2}-|f_{2}|^{2}))f_{1}\|_{L^{2}\cap W}+\|(K*|f_{2}|^{2})(f_{1}-f_{2})\|_{L^{2}\cap W}.
\end{aligned}
\]
We control the last two norms. We have
\[
\|(K*|f_{2}|^{2})(f_{1}-f_{2})\|_{L^{2}\cap W}=\phantom{\|(K*|f_{2}|^{2})(f_{1}-f_{2})\|_{L^{2}}+ \|(K*|f_{2}|^{2})(f_{1}-f_{2})\|_{W}}
\]
\[
\begin{aligned}
&=\|(K*|f_{2}|^{2})(f_{1}-f_{2})\|_{L^{2}}+\|(K*|f_{2}|^{2})(f_{1}-f_{2})\|_{W}\\
&\daw\|K*|f_{2}|^{2}\|_{L^{\infty}}\|f_{1}-f_{2}\|_{L^{2}}\|K*|f_{2}|^{2}\|_{W}\|f_{1}-f_{2}\|_{W} \\
&\daw C \|K*|f_{2}|^{2}\|_{W}\|f_{1}-f_{2}\|_{L^{2}\cap W}\\
&\daw C \|\mathcal{K}_{1}\|_{L^{1}} \|f_{2}\|^{2}_{L^{2}}\|f_{1}-f_{2}\|_{L^{2}\cap W} +C\|\mathcal{K}_{2}\|_{L^{\infty}} \|f_{2}\|^{2}_{W}\|f_{1}-f_{2}\|_{L^{2}\cap W},\\
\end{aligned}
\]
where we have used \eqref{ineg4} and the embedding of $W(\R^{d})$ into $L^{\infty}(\R^{d})$. Remark that

\[
|f_{1}|^{2}-|f_{2}|^{2}= \frac{1}{2} (f_{1}-f_{2})(\overline{f_{1}}+\overline{f_{2}})+
\frac{1}{2} (\overline{f_{1}}-\overline{f_{2}})(f_{1}+f_{2}).
\]
From \eqref{ineg4} and Lemma \ref{W_algebra} iii, we have
\newpage
\[
\|(K*(|f_{1}|^{2}-|f_{2}|^{2}))f_{1}\|_{L^{2}\cap W} \daw \phantom{\|K*(|f_{1}|^{2}-|f_{2}|^{2})\|_{L^{\infty}}\|f_{1}\|_{L^{2}\cap W}
+\|K*(|f_{1}|^{2}-|f_{2}|^{2})\|_{W}\|f_{1}\|_{L^{2}\cap W}}
\]
\[
\begin{aligned}
&\daw\|K*(|f_{1}|^{2}-|f_{2}|^{2})\|_{L^{\infty}}\|f_{1}\|_{L^{2}\cap W}
+\|K*(|f_{1}|^{2}-|f_{2}|^{2})\|_{W}\|f_{1}\|_{L^{2}\cap W}\\
&\daw  C\|K*(|f_{1}|^{2}-|f_{2}|^{2})\|_{W}\|f_{1}\|_{L^{2}\cap W} \\
&\daw C\|K*((f_{1}-f_{2})(\overline{f_{1}}+
\overline{f_{2}}))\|_{W}\|f_{1}\|_{L^{2}\cap W}\\
&\phantom{pp}+C\|K*((\overline{f_{1}}-\overline{f_{2}})(f_{1}+f_{2}))\|_{W}\|f_{1}\|_{L^{2}\cap W}\\
&\daw \|\mathcal{K}_{1}\|_{L^{1}} \|(f_{1}-f_{2})(\overline{f_{1}}+
\overline{f_{2}})\|_{L^{1}} \|f_{1}\|_{L^{2}\cap W}\\
&\phantom{pp}+C\|\mathcal{K}_{2}\|_{L^{\infty}} \|(f_{1}-f_{2})(\overline{f_{1}}+
\overline{f_{2}})\|_{W} \|f_{1}\|_{L^{2}\cap W}\\
&\phantom{pp}+ C\|\mathcal{K}_{1}\|_{L^{1}} \|(\overline{f_{1}}-\overline{f_{2}})(f_{1}+f_{2})\|_{L^{1}}\|f_{1}\|_{L^{2}\cap W}\\
&\phantom{pp}+ C\|\mathcal{K}_{2}\|_{L^{\infty}} \|(\overline{f_{1}}-\overline{f_{2}})(f_{1}+f_{2})\|_{W}\|f_{1}\|_{L^{2}\cap W}\\
&\daw
C\|f_{1}\|^{2}_{L^{2}\cap W}\|f_{1}-f_{2}\|_{L^{2}\cap W}
+C\|f_{1}\|_{L^{2}\cap W}\|f_{2}\|_{L^{2}\cap W}\|f_{1}-f_{2}\|_{L^{2}\cap W}\\
&\phantom{pp}+C\|f_{2}\|^{2}_{L^{2}\cap W}\|f_{1}-f_{2}\|_{L^{2}\cap W}\\
&\daw C (\|f_{1}\|^{2}_{L^{2}\cap W}+
\|f_{2}\|^{2}_{L^{2}\cap W})\|f_{1}-f_{2}\|_{L^{2}\cap W},
\end{aligned}
\]
where we have used the Cauchy-Schwartz inequality. The desired control follows easily.
\end{proof}
Let $T>0$ to be specified later. We set

\[
X= \lbrace u \in C([0,T]; L^{2} \cap W),
\|u\|_{L^{\infty}([0,T]; L^{2}\cap W)} \daw 2\|u_{0}\|_{L^{2}\cap W} \rbrace.
\]

Duhamel's formulation of \eqref{eq:(SP')} reads

\[
u(t)= U(t)u_{0} -i\lambda \int_{0}^{t} U(t-\tau)(K*|u|^{2})u(\tau)\; \mathrm{d}\tau.
\]
We denote by $\Phi(u)(t)$ the right hand side in the above formula and $G(u)=\Phi(u)(t)- U(t)u_{0}$. For $q\geq 1$ and a space $S$
we define  $\|u\|_{L^{q}_{T}S}:= \|u\|_{L^{q}([0,T]; S)}$ for $u\in L^{q}([0,T]; S)$. Let $ u\in X $. We have
\[
\begin{aligned}
\|\Phi(u)\|_{L^{\infty}_{T}L^{2}\cap L^{\infty}} & \daw
\|\Phi(u)(t)\|_{L^{\infty}_{T}L^{2}} + \|\Phi(u)(t)\|_{L^{\infty}_{T}W},
\end{aligned}
\]
and
\[
\begin{aligned}
\|\Phi(u)\|_{L^{\infty}_{T}L^{2}} &\daw \|u_{0}\|_{L^{2}} +
\|G(u)\|_{L^{\infty}_{T}L^{2}}\\
& \daw \|u_{0}\|_{L^{2}}+ C\|g\|_{L^{1}_{T}L^{2}}\\
&\daw \|u_{0}\|_{L^{2}}+ C\|K*|u|^{2}\|_{L^{\infty}_{T}L^{\infty}} \|u\|_{L^{\infty}_{T}L^{2}} T.
\end{aligned}
\]
We apply Lemma \ref{lemm_W} after replacing $h$ by $|u|^{2}$.
We obtain by the embedding of $W$ into $L^{\infty}$
\[
\begin{aligned}
\|K*|u|^{2}\|_{L^{\infty}} &\daw C \|K*|u|^{2}\|_{W}\\
&\daw C\|\mathcal{K}_{1}\|_{L^{1}}\||u|^{2}\|_{W} + C\|\mathcal{K}_{2}\|_{L^{\infty}} \||u|^{2}\|_{L^{1}}\\
&\daw C\|u\|^{2}_{L^{2}\cap W}.
\end{aligned}
\]
So we have
\[
\begin{aligned}
\|\Phi(u)\|_{L^{\infty}_{T}L^{2}}
&\daw \|u_{0}\|_{L^{2}}+ C\|u\|^{2}_{L^{\infty}_{T}L^{2}\cap W} \|u\|_{L^{\infty}_{T}L^{2}} T\\
&\daw \|u_{0}\|_{L^{2}}+ C\|u\|^{3}_{L^{\infty}_{T}L^{2}\cap W} T.
\end{aligned}
\]
Moreover, always by applying Lemma \ref{lemm_W} with $h=|u|^{2}$ we obtain
\[
\begin{aligned}
\|\Phi(u)(t)\|_{W}& \daw \|u_{0}\|_{W}+ \int_0^t \|K*|u|^{2}(\tau)\|_{W}\|u(\tau)\|_{W} \, \mathrm{d}\tau\\
&\daw \|u_{0}\|_{W}+ C\int_0^t  \|u(\tau)\|^{3}_{L^{2}\cap W}\, \mathrm{d} \tau,\\
\end{aligned}
\]
and thus
\[
\begin{aligned}
\|\Phi(u)\|_{L^{\infty}_{T}W} &\daw \|u_{0}\|_{W}+ C \|u\|^{3}_{L^{\infty}_{T}L^{2}\cap W}T.\\
\end{aligned}
\]
Finally we have
\[
\|\Phi(u)\|_{L^{\infty}_{T}L^{2}\cap W} \daw  \|u_{0}\|_{L^{2}\cap W}
+ C\|u_{0}\|_{L^{2}\cap W}^{3}T.
\]
By reducing sufficiently $T$ (depending on $\|u_{0}\|_{L^{2}\cap W}$)
we get for all $u\in X$, $ \|\Phi(u)(t)\|_{L^{\infty}_{T}L^{2}} \daw 2 \|u_{0}\|_{L^{2}\cap W}$.
Moreover, for $u, v \in X$ we have
\[
\|\Phi(u)-\Phi(v)\|_{L^{\infty}_{T}L^{2}\cap W} \daw \|G(u)-G(v)\|_{L^{\infty}_{T}L^{2}}
+ \|G(u)-G(v)\|_{L^{\infty}_{T}W}.
\]
For some positive constant $C$ we have
\[
\begin{aligned}
\|G(u)-G(v)\|_{L^{\infty}_{T}L^{2}} &\daw C\|g(u)-g(v)\|_{L^{1}_{T}L^{2}}\\
&\daw C\|g(u)-g(v)\|_{L^{\infty}_{T}L^{2}}T\\
&\daw C\|g(u)-g(v)\|_{L^{\infty}_{T}L^{2}\cap W}T.\\
\end{aligned}
\]
Moreover
\[
\begin{aligned}
\|G(u)-G(v)\|_{W} &\daw \int_0^t \|g(u)(\tau)-g(v)(\tau)\|_{W} \, \mathrm{d}\tau,\\
\end{aligned}
\]
and
\[
\begin{aligned}
\|G(u)-G(v)\|_{L^{\infty}_{T} W} &\daw \|g(u)-g(v)\|_{L^{\infty}_{T}W}T\\
& \daw \|g(u)-g(v)\|_{L^{\infty}_{T}L^{2}\cap W}T.
\end{aligned}
\]
By replacing $f_{1}, f_{2}$ by $u, v$ respectively in Lemma \ref{lemm_W} we obtain
\[
\begin{aligned}
\|\Phi(u)-\Phi(v)\|_{L^{\infty}_{T}L^{2}\cap W}  &\daw C\|g(u)-g(v)\|_{L^{\infty}_{T}L^{2}\cap W}T\\
&\daw C \|u_{0}\|_{L^{2}\cap W}^{2} \|u-v\|_{L^{\infty}_{T}L^{2}\cap W}T.
\end{aligned}
\]
Choosing $T$ possibly smaller (still depending on $\|u_{0}\|_{L^{2}\cap W}$) we deduce that
$\Phi$ is a contraction from $X$ to $X$. Thus $\Phi$ has a unique fixed point $u\in X$ and Theorem
$\ref{exist_uniq}$ follows.
\end{proof}

\section{Derivation of the approximate solution}

We consider the rescaled version of \eqref{eq:(SP')} :
\begin{equation}
\label{eq:SPultim}
\ i\varepsilon \partial_{t} u^{\varepsilon} +
\frac{\eps^{2}}{2} \Delta u^{\varepsilon}=
\varepsilon \lambda (K*{|u^{\varepsilon}|}^{2})u^{\varepsilon}.
\end{equation}
We seek an approximation of solutions to \eqref{eq:SPultim} in the form
\[
u^{\eps}_{app}(t,x)=\sum_{j\in \N} a_{j}(t,x) e^{i\phi_{j}(t,x)/\eps}.
\]
We begin by proceeding formally. We plug the ansatz above into \eqref{eq:SPultim}. This yields
\begin{equation}
\label{eq_sol_app}
i\eps\partial_{t}u^{\eps}_{app}+\frac{\eps^{2}}{2} \Delta u^{\eps}_{app}-
\eps(K*|u^{\eps}_{app}|^{2})u^{\eps}_{app}= \sum_{k=0}^{2} \eps^{k}Z^{\eps}_{k}
+\eps(W^{\eps}+r^{\eps}),
\end{equation}
with
\[
Z^{\eps}_{0}= -\sum_{j\in \N}(\partial_{t}\phi_{j}+\frac{1}{2}|\nabla\phi_{j}|^{2})a_{j}e^{i\phi_{j}/\eps},
\]
\[
Z^{\eps}_{1}= i\sum_{j\in \N} (\partial_{t}a_{j}+\nabla\phi_{j}\nabla a_{j}+
\frac{1}{2}a_{j}\Delta\phi_{j})e^{i\phi_{j}/\eps},
\]
and
\begin{equation}
\label{term_r2}
Z^{\eps}_{2}=\frac{1}{2}\sum_{j\in \N} \Delta a_{j}e^{i\phi_{j}/\eps}.
\end{equation}
Other terms appear due to the presence of the nonlinearity which are
\begin{equation}
\label{pas_rest}
W^{\eps}= -\sum_{j\in \N}(K*\sum_{\ell\in \N}|a_{\ell}|^{2})a_{j}e^{i\phi_{j}/\eps},
\end{equation}
and
\begin{equation}
\label{rest}
r^{\eps}= -\sum_{j\in \N}(K*\sum_{\tiny \begin{matrix}k,\ell \in \N \\
k\neq \ell\end{matrix}} (a_{k}\overline{a_{\ell}}e^{i(\phi_{k}-\phi_{\ell})/\eps}))a_{j}e^{i\phi_{j}/\eps}.
\end{equation}
We aim to eliminate all equal powers of $\eps$. Hence, by setting $Z^{\eps}_{0}=0$ we obtain
the eikonal equation, whose solution is explicitly given by
\begin{equation}
\label{eikonale}
\phi_{j}(t,x)= \kappa_{j}\cdot x-\frac{t}{2} |\kappa_{j}|^{2}.
\end{equation}
Next, we set $Z^{\eps}_{1}+W^{\eps}=0$ without including $r^{\eps}$. This latter will constitute the first term error, the second
will be $Z^{\eps}_{2}$. We obtain for all $j\in\N$

\begin{equation}
\label{eq_amplitude}
\partial_{t}a_{j}+ \kappa_{j}\cdot \nabla a_{j} = -i(K*\sum_{\ell\in \N}|a_{\ell}|^{2})a_{j}, \quad
a_{j}(0)=\alpha_{j},
\end{equation}

where we have used the fact that $\Delta\phi_{j}=0.$

\begin{lemme}
\label{lemm_ampl}
The transport equation \eqref{eq_amplitude} with initial amplitudes $(\alpha_{j})_{j\in \N} \in E(\R^{d})$ admits
a unique global-in-time solutions $a=(a_{j})_{j\in \N}\in C([0,\infty[;E(\R^{d}))$, which can be written
in the form
\begin{equation}
\label{form_amplitude}
a_{j}(t,x)=\alpha_{j}(x-t\kappa_{j})e^{iS_{j}(t,x)},
\end{equation}
where
\begin{equation}
\label{ampli_puissance}
S_{j}(t,x)= -\int_{0}^{t}(K*\sum_{\ell \in \N} |\alpha_{\ell}(x +(\tau-t)\kappa_{j}-\tau\kappa_{\ell})|^{2})\;\mathrm{d}\tau.
\end{equation}
\end{lemme}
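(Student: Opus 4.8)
The plan is to exploit a structural feature of \eqref{eq_amplitude}: the convolution potential $V(t,x):=\big(K*\sum_{\ell\in\N}|a_{\ell}(t)|^{2}\big)(x)$ is \emph{real-valued}, since $K>0$ and $\sum_{\ell}|a_{\ell}|^{2}\geqslant 0$. Hence, along the characteristics $x\mapsto x+t\kappa_{j}$ of $\partial_{t}+\kappa_{j}\cdot\nabla$, equation \eqref{eq_amplitude} becomes the ODE $\tfrac{d}{dt}a_{j}=-iVa_{j}$ with purely imaginary coefficient, so the modulus of $a_{j}$ is merely transported: any solution satisfies $|a_{j}(t,x)|=|\alpha_{j}(x-t\kappa_{j})|$. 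Consequently $\sum_{\ell}|a_{\ell}(t,x)|^{2}=\sum_{\ell}|\alpha_{\ell}(x-t\kappa_{\ell})|^{2}$, so $V$ is \emph{explicit in terms of the data}, $V(t,x)=\sum_{\ell\in\N}(K*|\alpha_{\ell}|^{2})(x-t\kappa_{\ell})$, and the nonlinear system decouples into the linear transport equations $(\partial_{t}+\kappa_{j}\cdot\nabla)a_{j}=-iVa_{j}$, $a_{j}(0)=\alpha_{j}$. Solving each by characteristics produces precisely \eqref{form_amplitude} with $S_{j}(t,x)=-\int_{0}^{t}V(\tau,x+(\tau-t)\kappa_{j})\,\mathrm{d}\tau$, which, after translating the convolution variable, is \eqref{ampli_puissance}. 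I would therefore prove existence by directly verifying this explicit formula, and uniqueness by the reduction just sketched.

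For existence I would first control $V$. Since $(\alpha_{\ell})_{\ell}\in\ell^{1}(\N,Y)$, for $|\eta|\daw n$ the function $\partial^{\eta}(|\alpha_{\ell}|^{2})$ is, by Leibniz, a finite sum of products $\partial^{\mu}\alpha_{\ell}\,\overline{\partial^{\eta-\mu}\alpha_{\ell}}$ with $|\mu|,|\eta-\mu|\daw n$, hence lies in $L^{1}\cap W$ with norm $\daw C\|\alpha_{\ell}\|_{Y}^{2}$; using $\partial^{\eta}(K*h)=K*\partial^{\eta}h$, estimate \eqref{ineg4}, and $\sum_{\ell}\|\alpha_{\ell}\|_{Y}^{2}\daw\|\alpha\|_{E}^{2}$, one gets $\|\partial^{\eta}V(t)\|_{W}\daw C\|\alpha\|_{E}^{2}$ for all $t$ and $|\eta|\daw n$, with $V$ real and, by $W\hookrightarrow L^{\infty}$, bounded. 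Then $S_{j}$ from \eqref{ampli_puissance} is real, $C^{1}$ in $t$, solves $(\partial_{t}+\kappa_{j}\cdot\nabla)S_{j}=-V$ with $S_{j}(0)=0$, and, by translation invariance of the $W$-norm, satisfies $\|\partial^{\eta}S_{j}(t)\|_{W}\daw Ct\|\alpha\|_{E}^{2}$ for $|\eta|\daw n$, uniformly in $j$. Expanding $e^{iS_{j}}=\sum_{k\geqslant0}(iS_{j})^{k}/k!$ and invoking the algebra estimate of Lemma \ref{W_algebra}(ii) gives $\|e^{iS_{j}}(t)\|_{W}\daw e^{Ct\|\alpha\|_{E}^{2}}$, and the Fa\`a di Bruno/Leibniz formula for $\partial^{\eta}e^{iS_{j}}$ ($|\eta|\daw n$) then yields $\|\partial^{\eta}e^{iS_{j}}(t)\|_{W}\daw C(t,\|\alpha\|_{E})$, again uniformly in $j$. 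Hence, by Leibniz together with $\|fg\|_{L^{2}}\daw\|f\|_{L^{2}}\|g\|_{L^{\infty}}$, $\|fg\|_{W}\daw\|f\|_{W}\|g\|_{W}$ and translation invariance, $a_{j}:=\alpha_{j}(\cdot-t\kappa_{j})e^{iS_{j}}$ obeys $\|a_{j}(t)\|_{Y}\daw C(t,\|\alpha\|_{E})\|\alpha_{j}\|_{Y}$; summing over $j$ gives $a(t)\in E$ with $\|a(t)\|_{E}\daw C(t,\|\alpha\|_{E})\|\alpha\|_{E}$ for every $t\geqslant0$, i.e.\ a global-in-time solution. Continuity of $t\mapsto a(t)\in E$ follows by dominated convergence in $\sum_{j}\|a_{j}(t)-a_{j}(t_{0})\|_{Y}$, using continuity of translations in $L^{2}$ and in $W$ (because $\widehat{f(\cdot-h)}=e^{-ih\cdot\xi}\widehat f$ and $\widehat f\in L^{1}$), of $t\mapsto S_{j}(t)$, and of $z\mapsto e^{iz}$ on bounded subsets of $W$, with the $j$-uniform bound as integrable majorant. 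Finally $a$ solves \eqref{eq_amplitude}, since $(\partial_{t}+\kappa_{j}\cdot\nabla)\alpha_{j}(\cdot-t\kappa_{j})=0$ and $(\partial_{t}+\kappa_{j}\cdot\nabla)S_{j}=-V$ give $(\partial_{t}+\kappa_{j}\cdot\nabla)a_{j}=-iVa_{j}$, while $|a_{\ell}(t,x)|=|\alpha_{\ell}(x-t\kappa_{\ell})|$ forces $K*\sum_{\ell}|a_{\ell}(t)|^{2}=V(t)$.

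For uniqueness, let $b=(b_{j})\in C([0,\infty[;E)$ be any solution, with potential $\widetilde V:=K*\sum_{\ell}|b_{\ell}|^{2}$, which is real; then $(\partial_{t}+\kappa_{j}\cdot\nabla)|b_{j}|^{2}=2\,\mathrm{Re}\big(\overline{b_{j}}(-i\widetilde Vb_{j})\big)=0$, so $|b_{j}(t,x)|=|\alpha_{j}(x-t\kappa_{j})|$ and hence $\widetilde V=V$, the explicit potential above. Thus $b_{j}$ and the constructed $a_{j}$ both solve the \emph{same} linear transport equation $(\partial_{t}+\kappa_{j}\cdot\nabla)c_{j}=-iVc_{j}$ with data $\alpha_{j}$, and the energy identity $\tfrac{d}{dt}\|a_{j}(t)-b_{j}(t)\|_{L^{2}}^{2}=0$ — both the transport term and the purely imaginary zeroth-order term contributing nothing, since $V$ is real — forces $a_{j}\equiv b_{j}$.

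The main obstacle is the bookkeeping of the existence step: one must propagate $n$ spatial derivatives and $\ell^{1}(\N,Y)$-summability \emph{globally in time} through the two operations that interact awkwardly with the singular kernel — convolution with $K$, tamed via the splitting $\chap K=\mathcal{K}_{1}+\mathcal{K}_{2}$ of Lemma \ref{lemm_W}, which absorbs the singularity $\chap K\notin L^{\infty}$ at the origin into $\mathcal{K}_{1}\in L^{1}$, and the exponential $e^{iS_{j}}$, handled via the Wiener-algebra structure and the observation that, although $e^{iS_{j}}\notin L^{2}$, multiplying the $Y$-function $\alpha_{j}(\cdot-t\kappa_{j})$ by a multiplier whose derivatives of order $\daw n$ lie in $W$ lands back in $Y$. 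The uniformity in $j$ of all constants — which is what makes the sums over $j$ converge — is automatic from translation invariance of the $L^{2}$- and $W$-norms, and the only price is an $e^{Ct\|\alpha\|_{E}^{2}}$ growth of the constants, harmless for a global-in-time statement.
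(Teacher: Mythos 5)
Your proposal is correct in substance and shares with the paper the central structural observation --- multiplying \eqref{eq_amplitude} by $\overline{a_{j}}$ and taking real parts shows that the modulus is transported along characteristics, which makes the Hartree potential explicit in terms of the data and yields \eqref{form_amplitude}--\eqref{ampli_puissance} --- but the analytic machinery you build around it is genuinely different. The paper does not verify the explicit formula directly: it recasts \eqref{eq_amplitude} as an integral equation, proves the cubic bound $\|\mathcal{N}(a)\|_{E}\daw C\|a\|_{E}^{3}$ to get local existence and uniqueness from the Cauchy--Lipschitz theorem in the Banach space $E$, and then obtains globality by substituting $|a_{\ell}(\tau)|^{2}=|\alpha_{\ell}(\cdot-\tau\kappa_{\ell})|^{2}$ into the Duhamel term so that Gronwall gives an a priori exponential bound on $\|a(t)\|_{E}$. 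You instead construct the solution explicitly and check it, which forces you to control $\partial^{\eta}e^{iS_{j}}$ in $W$ for $|\eta|\daw n$ via the power series and Fa\`a di Bruno, and you obtain uniqueness from the $L^{2}$ energy identity for the decoupled linear transport equation rather than from the contraction. Both routes work; the paper's avoids the combinatorial estimates on the exponential (it never needs to show that the explicit formula itself defines an element of $E$, only that the abstract solution does and can be written in that form), while yours is more self-contained in that existence, globality and the explicit representation are established in one stroke.

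One small repair is needed in your existence step: with the paper's definition $W=\mathcal{F}(L^{1})$, constants do not belong to $W$ (the Fourier transform of $1$ is a Dirac mass), so $e^{iS_{j}}\notin W$ and the intermediate bound $\|e^{iS_{j}}(t)\|_{W}\daw e^{Ct\|\alpha\|_{E}^{2}}$ is not literally meaningful. Write $e^{iS_{j}}=1+(e^{iS_{j}}-1)$ with $e^{iS_{j}}-1=\sum_{k\geqslant 1}(iS_{j})^{k}/k!\in W$ and $\|e^{iS_{j}}-1\|_{W}\daw e^{\|S_{j}\|_{W}}-1$; then $a_{j}=\alpha_{j}(\cdot-t\kappa_{j})+\alpha_{j}(\cdot-t\kappa_{j})\left(e^{iS_{j}}-1\right)$, and the algebra property of Lemma \ref{W_algebra} applies to the second term. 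The same decomposition must be carried through the Leibniz/Fa\`a di Bruno expansion of $\partial^{\eta}a_{j}$, whose terms involve only $\partial^{\theta}S_{j}\in W$ for $1\daw|\theta|\daw n$ and are therefore unproblematic. With this adjustment your argument closes.
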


\begin{proof}
We multiply \eqref{eq_amplitude} with $\overline{a_{j}}$. We obtain
\begin{equation}
\label{dd}
\overline{a_{j}}\partial_{t}a_{j}+ \kappa_{j}\cdot (\overline{a_{j}}\nabla a_{j}) = -i(K*\sum_{\ell\in \N}|a_{\ell}|^{2})|a_{j}|^{2}
\in i\R.
\end{equation}
But
\[
2\mathrm{Re}(\overline{a_{j}}\partial_{t}a_{j}+ \kappa_{j}\cdot (\overline{a_{j}}\nabla a_{j}))
=(\partial_{t}+\kappa_{j}\cdot \nabla)|a_{j}|^{2}.
\]
So, we deduce from \eqref{dd} that $(\partial_{t}+\kappa_{j}\cdot \nabla)|a_{j}|^{2}=0$, which yields
\[
|a_{j}(t,x)|^{2}=|\alpha_{j}(x-t\kappa_{j})|^{2}.
\]
and \eqref{form_amplitude} follows for some real function $S_{j}$. To determine $S_{j}$ we inject
\eqref{form_amplitude} into \eqref{eq_amplitude}. We get
\[
i(\partial_{t}+\kappa_{j}\cdot\nabla)S_{j}(t,x)\alpha_{j}(x-t\kappa_{j})=-i(K*\sum_{\ell\in \N}|\alpha_{j}(x-t\kappa_{\ell})|^{2})
\alpha_{j}(x-t\kappa_{j}).
\]
It suffices to impose
\[
\partial_{t}(S_{j}(t,x+t\kappa_{j}))=-K*\sum_{\ell\in \N}|\alpha_{j}(x-t(\kappa_{\ell}-\kappa_{j}))|^{2},
\]
which yields
\[
S_{j}(t,x+t\kappa_{j})=-\int_{0}^{t} K*\sum_{\ell\in \N}|\alpha_{j}(x-\tau(\kappa_{\ell}-\kappa_{j}))|^{2}\;\mathrm{d}\tau,
\]
and finally we have
\[
S_{j}(t,x)=-\int_{0}^{t} K*\sum_{\ell\in \N}|\alpha_{j}(x+(\tau-t)\kappa_{j})-\tau\kappa_{\ell}|^{2}\;\mathrm{d}\tau.
\]

Global in time existence of $a_{j}$'s is not trivial from their explicit formula like we can see in the following. We rewrite \eqref{eq_amplitude} in its integral form

\begin{equation}
\label{form_integ_aj}
a_{j}(t,x)= \alpha_{j}(x-t\kappa_{j}) + \int_{0}^{t} \mathcal{N}(a)_{j}\, (\tau, x+(\tau-t)\kappa_{j})\mathrm{d}\tau,
\end{equation}
where the nonlinearity $\mathcal{N}$ is given by
\[
\mathcal{N}(a)_{j}=-i\bigg(K*\sum_{\ell\in \N} |a_{\ell}|^{2}\bigg)a_{j}.
\]
For $a\in E(\R^{d})$ we have by definition
\[
\begin{aligned}
\|\mathcal{N}(a)\|_{E} &= \sum_{j\in \N} \|\mathcal{N}(a)_{j}\|_{Y}
=\sum_{j\in \N} \sum_{|\eta|\daw n}\|\partial^{\eta}\mathcal{N}(a)_{j}\|_{L^{2}\cap W}.
\end{aligned}
\]
Let $|\eta|\daw n$. Leibnitz formula yields
\[
\begin{aligned}
\|\partial^{\eta}\mathcal{N}(a)_{j}\|_{L^{2}\cap W}
=&\|\partial^{\eta}\bigg((K*\sum_{\ell\in \N} |a_{\ell}|^{2})a_{j}\bigg)\|_{L^{2}\cap W}\\
\daw &\sum_{\theta\daw \eta} C^{\theta}_{\eta} \|(K*\sum_{\ell \in \N} \partial^{\theta}|a_{\ell}|^{2})\;\partial^{\eta-\theta}a_{j}\|_{L^{2}\cap W}\\
\daw &C \sum_{|\theta|\daw n} \|(K*\sum_{\ell \in \N} \partial^{\theta}|a_{\ell}|^{2})\|_{W}\|a_{j}\|_{Y}\\
\daw &C \|a_{j}\|_{Y} \bigg(\sum_{\ell\in \N}\|\mathcal{K}_{1}\|_{L^{1}}
\sum_{|\theta|\daw n}\|\partial^{\theta}|a_{\ell}|^{2}\|_{L^{1}}\\
&+\|\mathcal{K}_{2}\|_{L^{\infty}}
\sum_{|\theta|\daw n}\|\partial^{\theta}|a_{\ell}|^{2}\|_{W}\bigg)\\
\daw &C \|a_{j}\|_{Y}\sum_{\ell\in \N}(\|\mathcal{K}_{1}\|_{L^{1}}+\|\mathcal{K}_{2}\|_{L^{\infty}}) \|a_{\ell}\|^{2}_{Y}\\
\daw &C\|a_{j}\|_{Y}\|a\|^{2}_{E}.
\end{aligned}
\]
Finally we have
\[
\|\mathcal{N}(a)\|_{E}\daw C\|a\|^{3}_{E}.
\]
This shows that $\mathcal{N}(a)$ defines a continuous mapping from $E^{3}$ to $E$ and by the standard
Cauchy-Lipschitz theorem for the ordinary differential equations, a local-in-time existence results immediately follows.
Now, we have to show that the solution $a(t)=(a_{j}(t))_{j\in \N} $ is global in time. Let $[0,T_{\max}[$
be the maximal time interval where $(a_{j})_{j\in \N}$ is defined. From \eqref{form_integ_aj} we have for $|\eta|\daw n$ and $t\in [0,T_{\max}[$
\[
\begin{aligned}
\partial^{\eta}a_{j}(t)=&\;\partial^{\eta}\alpha_{j}(\cdot-t\kappa_{j}) +
\int_{0}^{t} \partial^\eta\mathcal{N}(a)_{j}(\tau,\cdot+(\tau-t)\kappa_{j})\;\mathrm{d}\tau\\
=&\;\partial^{\eta}\alpha_{j}(\cdot-t\kappa_{j})\\
&+\int_{0}^{t}\sum_{\theta\daw \eta}C^{\theta}_{\eta} (K*\sum_{\ell \in \N} \partial^{\theta}|\alpha_{\ell}(\cdot-t\kappa_{j})|^{2})\;\partial^{\eta-\theta}a_{j}(\tau,\cdot+(\tau-t)\kappa_{j})\;\mathrm{d}\tau.\\
\end{aligned}
\]
Taking $L^{2}\cap W$ norm yields
\[
\|\partial^{\eta}a_{j}(t)\|_{L^{2}\cap W} \daw \phantom{\|\partial^{\eta}\alpha_{j}\|_{L^{2}\cap W}+C\sum_{\theta\daw \eta} \int_{0}^{t}
\|K*\sum_{\ell \in \N} \partial^{\theta}|\alpha_{\ell}(\cdot-t\kappa_{j})|^{2}\|_{W}
 \|\partial^{\eta-\theta}a_{j}(\tau)\|_{L^{2}\cap W}\;\mathrm{d}\tau}
 \]
 \[
 \begin{aligned}
 &\daw \|\partial^{\eta}\alpha_{j}\|_{L^{2}\cap W}+ C\sum_{\theta\daw \eta} \int_{0}^{t}
 \|K*\sum_{\ell \in \N} \partial^{\theta}|\alpha_{\ell}(\cdot-t\kappa_{j})|^{2}\|_{W}
  \|\partial^{\eta-\theta}a_{j}(\tau)\|_{L^{2}\cap W}\;\mathrm{d}\tau\\
  &\daw\|\partial^{\eta}\alpha_{j}\|_{L^{2}\cap W}
  +C\sum_{\theta\daw \eta} \int_{0}^{t} \|K*\sum_{\ell \in \N} \partial^{\theta}|\alpha_{\ell}|^{2}\|_{W}
  \|\partial^{\eta-\theta}a_{j}(\tau)\|_{L^{2}\cap W}\;\mathrm{d}\tau\\
  &\daw \|\partial^{\eta}\alpha_{j}\|_{L^{2}\cap W}
  +C (\|\mathcal{K}_{1}\|_{L^{1}}+\|\mathcal{K}_{2}\|_{L^{\infty}})\sum_{\ell\in \N}\|\alpha_{\ell}\|^{2}_{Y} \int_{0}^{t}\|a_{j}(\tau)\|_{Y}\;\mathrm{d}\tau\\
  &\daw\|\partial^{\eta}\alpha_{j}\|_{L^{2}\cap W}+C\int_{0}^{t}\|a_{j}(\tau)\|_{Y}\;\mathrm{d}\tau,\\
  \end{aligned}
\]
and so
\[
\begin{aligned}
\|a_{j}(t)\|_{Y}=\sum_{|\eta|\daw n} \|\partial^{\eta} a_{j}(t)\|_{L^{2}\cap W}
\daw \|\alpha_{j}\|_{Y} +C\int_{0}^{t}\|a_{j}(\tau)\|_{Y}\;\mathrm{d}\tau.
\end{aligned}
\]
By Gronwall lemma
\[
\|a_{j}(t)\|_{Y}\daw \|\alpha_{j}\|_{Y}e^{Ct}.
\]
for some positive constant $C$ independent of $j$ and $t$. After summing with respect to $j$ we obtain
\[
\|a(t)\|_{E}\daw \|\alpha\|_{E}e^{Ct}.
\]
The growth of $\|a(t)\|_{E}$ is at most exponential, and thus can not explode at finite time. We deduce that
 $T_{\max}=\infty$ and $a=(a_{j})_{j\in \N}\in C([0,\infty[;E(\R^{d}))$.

\end{proof}
\newpage
\section{Estimations on the remainder}

We will estimate in $ L^{2}\cap W$ the term $r^{\eps}$ and $Z^{\eps}_{2}$. To this end, we assume
$(\alpha_{j})_{j\in \N} \in \ell^{1}(\N, Y(\R^{d}))$.

\begin{Prop}
\label{estim_error}
Let $r^{\eps}$ be defined by \eqref{rest} with the plane-wave phases $\phi_{j}$
given by \eqref{eikonale}. We have the following bound:

\begin{equation}
\label{control_R}
\|r^{\eps}\|_{L^{2}\cap W}\daw C\delta^{\gamma-d}\|a\|_{E}^{3}\eps^{d-\gamma},
\end{equation}

\begin{equation}
\label{control_X2}
\|Z^{\eps}_{2}\|_{L^{2}\cap W} \daw \|a\|_{E},
\end{equation}
where $\delta$ is like defined in Theorem \ref{mainTh}.

\end{Prop}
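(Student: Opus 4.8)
The plan is to reduce both bounds to a single \emph{high-frequency averaging} estimate. For a fixed time $t$ and two indices $k\neq\ell$, I would prove
\[
\|K*(a_{k}\overline{a_{\ell}}e^{i(\phi_{k}-\phi_{\ell})/\eps})\|_{W}\daw C\,\delta^{\gamma-d}\eps^{d-\gamma}\|a_{k}\|_{Y}\|a_{\ell}\|_{Y},
\]
the gain $\eps^{d-\gamma}$ being produced by the non-resonant frequency $\mu:=(\kappa_{k}-\kappa_{\ell})/\eps$, whose modulus is $\geqslant\delta/\eps$. First I would write $e^{i(\phi_{k}-\phi_{\ell})/\eps}=e^{-it(|\kappa_{k}|^{2}-|\kappa_{\ell}|^{2})/2\eps}e^{i\mu\cdot x}$ (the first factor is unimodular and $x$-independent) and, exactly as in the proof of Lemma \ref{lemm_W}, pass to Fourier variables: since $\chap{g\,e^{i\mu\cdot x}}(\xi)=\chap{g}(\xi-\mu)$,
\[
\|K*(a_{k}\overline{a_{\ell}}e^{i(\phi_{k}-\phi_{\ell})/\eps})\|_{W}=C\,C_{d,\gamma}\int_{\R^{d}}\frac{|\chap{a_{k}\overline{a_{\ell}}}(\zeta)|}{|\zeta+\mu|^{d-\gamma}}\,\mathrm{d}\zeta .
\]

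Next I would split this integral into three regions. On $\{|\zeta|\daw|\mu|/2\}$ one has $|\zeta+\mu|\geqslant|\mu|/2$, so the contribution is $\daw C|\mu|^{-(d-\gamma)}\|\chap{a_{k}\overline{a_{\ell}}}\|_{L^{1}}=C|\mu|^{-(d-\gamma)}\|a_{k}\overline{a_{\ell}}\|_{W}$ --- the leading term. On $\{|\zeta|>|\mu|/2,\ |\zeta+\mu|\geqslant1\}$ one has $|\zeta+\mu|^{-(d-\gamma)}\daw1\daw(2|\zeta|/|\mu|)^{d-\gamma}$, so the contribution is $\daw C|\mu|^{-(d-\gamma)}\int(1+|\zeta|)^{d-\gamma}|\chap{a_{k}\overline{a_{\ell}}}(\zeta)|\,\mathrm{d}\zeta$, controlled by H\"older interpolation between $\chap{a_{k}\overline{a_{\ell}}}\in L^{1}$ and $(1+|\zeta|)^{n}\chap{a_{k}\overline{a_{\ell}}}\in L^{1}$ (legitimate since $d-\gamma\daw n$). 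On $\{|\zeta+\mu|<1\}$, which for $\eps$ small forces $|\zeta|\geqslant|\mu|-1\geqslant|\mu|/2$, the symbol of $K$ is singular but $\chap{a_{k}\overline{a_{\ell}}}$ is sampled only deep in its tail; here I would use the pointwise bound $|\chap{a_{k}\overline{a_{\ell}}}(\zeta)|\daw C(1+|\zeta|)^{-n}\|a_{k}\|_{Y}\|a_{\ell}\|_{Y}$, obtained by writing $\chap{a_{k}\overline{a_{\ell}}}=C\,\chap{a_{k}}*\chap{\overline{a_{\ell}}}$ and splitting the convolution, using $\chap{a_{k}},\chap{\overline{a_{\ell}}},|\cdot|^{n}\chap{a_{k}},|\cdot|^{n}\chap{\overline{a_{\ell}}}\in L^{1}$ (all controlled by $\|a_{k}\|_{Y},\|a_{\ell}\|_{Y}$ via $\|\partial^{\eta}a\|_{W}=\|\xi^{\eta}\chap{a}\|_{L^{1}}$), to get a contribution $\daw C\|a_{k}\|_{Y}\|a_{\ell}\|_{Y}(|\mu|/2)^{-n}\int_{|\zeta'|<1}|\zeta'|^{-(d-\gamma)}\mathrm{d}\zeta'\daw C\|a_{k}\|_{Y}\|a_{\ell}\|_{Y}|\mu|^{-(d-\gamma)}$, using $n\geqslant d-\gamma$ and $|\mu|\geqslant1$. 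Together with $\|a_{k}\overline{a_{\ell}}\|_{Y}\daw C\|a_{k}\|_{Y}\|a_{\ell}\|_{Y}$ (Leibniz, the algebra property of $W$, $W\hookrightarrow L^{\infty}$) and $|\mu|\geqslant\delta/\eps$, this yields the averaging estimate. The inequality $n\geqslant d-\gamma$ is read off Definition \ref{def_espace}: $n=2>d-\gamma$ for $d\in\{1,2\}$; $n=2\geqslant d-\gamma$ for $d=3,\gamma\in[1,3[$; $n=3>d-\gamma$ for $d=3,\gamma\in\,]0,1[$.

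Given the averaging estimate, \eqref{control_R} follows by expanding \eqref{rest} and using that $W$ is an algebra, $W\hookrightarrow L^{\infty}$, and that multiplication by the unimodular factor $e^{i\phi_{j}/\eps}=e^{-it|\kappa_{j}|^{2}/2\eps}e^{i\kappa_{j}\cdot x/\eps}$ leaves the $L^{2}$ and $W$ norms unchanged (the latter because translating $\chap{a_{j}}$ does not change its $L^{1}$ norm):
\[
\|r^{\eps}\|_{L^{2}\cap W}\daw\sum_{j\in\N}\|a_{j}\|_{L^{2}\cap W}\,\Bigl\|K*\!\!\sum_{k\neq\ell}a_{k}\overline{a_{\ell}}e^{i(\phi_{k}-\phi_{\ell})/\eps}\Bigr\|_{W}\daw C\delta^{\gamma-d}\eps^{d-\gamma}\sum_{j}\|a_{j}\|_{Y}\sum_{k}\|a_{k}\|_{Y}\sum_{\ell}\|a_{\ell}\|_{Y},
\]
which is $C\delta^{\gamma-d}\eps^{d-\gamma}\|a\|_{E}^{3}$ since $a\in\ell^{1}(\N,Y)$. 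The bound \eqref{control_X2} is then immediate from \eqref{term_r2}: by the same unimodular invariance, $\|\Delta a_{j}e^{i\phi_{j}/\eps}\|_{L^{2}\cap W}=\|\Delta a_{j}\|_{L^{2}\cap W}\daw\sum_{|\eta|=2}\|\partial^{\eta}a_{j}\|_{L^{2}\cap W}\daw\|a_{j}\|_{Y}$ (valid since $n\geqslant2$), so $\|Z^{\eps}_{2}\|_{L^{2}\cap W}\daw\tfrac12\sum_{j}\|a_{j}\|_{Y}\daw\|a\|_{E}$.

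The hard part will be the third region above: when $\gamma$ is small, $\chap{K}(\xi)=C_{d,\gamma}|\xi|^{-(d-\gamma)}$ is genuinely singular at the point $-\mu$, which lies far from the origin, so one must measure precisely how fast $\chap{a_{k}\overline{a_{\ell}}}$ decays there. Extracting the sharp power $\eps^{d-\gamma}$, rather than a weaker one, is exactly what forces the use of the pointwise tail bound $|\chap{a_{k}\overline{a_{\ell}}}(\zeta)|\daw C(1+|\zeta|)^{-n}$ coming from the convolution structure of the Fourier transform of a product; this is where the number $n$ of derivatives defining $Y(\R^{d})$, together with $n\geqslant d-\gamma$, is actually used. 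A cruder estimate not exploiting this decay would only give a weaker power of $\eps$, insufficient to close the argument when $d-\gamma$ is near its maximal value.
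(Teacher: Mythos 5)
Your proposal is correct and follows essentially the same route as the paper: you reduce \eqref{control_R} to the single non-resonant estimate $\|K*(a_{k}\overline{a_{\ell}}e^{i(\phi_{k}-\phi_{\ell})/\eps})\|_{W}\daw C\delta^{\gamma-d}\eps^{d-\gamma}\|a_{k}\|_{Y}\|a_{\ell}\|_{Y}$ via the frequency shift by $(\kappa_{k}-\kappa_{\ell})/\eps$, control the singular region of $\chap{K}$ by the weighted $L^{\infty}$ bound on $\chap{a_{k}\overline{a_{\ell}}}$ and the complementary region by the weighted $L^{1}$ bound, exactly the two quantities $\|\lang\xi\rang^{d-\gamma}\chap{a_{k}\overline{a_{\ell}}}\|_{L^{\infty}}$ and $\|\lang\xi\rang^{d-\gamma}\chap{a_{k}\overline{a_{\ell}}}\|_{L^{1}}$ that the paper estimates by $\|a_{k}\|_{Y}\|a_{\ell}\|_{Y}$ using $d-\gamma\daw n$, and then sum in $\ell^{1}$. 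Your explicit three-region split of the $\zeta$-integral is only a cosmetic variant of the paper's $\mathcal{K}_{1}/\mathcal{K}_{2}$ decomposition combined with Peetre's inequality, so the two arguments coincide in substance.
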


\begin{proof}

Inequality \eqref{control_X2} is obvious. We recall the definition of $r^{\eps}$:

\[
r^{\eps}= -\sum_{j\in \N}(K*\sum_{\tiny \begin{matrix}k,\ell \in \N \\
k\neq \ell\end{matrix}} a_{k}\overline{a_{\ell}}e^{i(\phi_{k}-\phi_{\ell})/\eps})a_{j}e^{i\phi_{j}/\eps}.
\]

We estimate $W$ norm of $r^{\eps}$, we obtain

\[
\begin{aligned}
\|r^{\eps}\|_{W} &= \|\sum_{j\in \N}(K*\sum_{\tiny \begin{matrix}k,\ell \in \N \\
k\neq \ell\end{matrix}} a_{k}\overline{a_{\ell}}e^{i(\phi_{k}-\phi_{\ell})/\eps})
a_{j}e^{i\phi_{j}/\eps}\|_{W}\\
 &\daw \sum_{j\in \N}\|K*\sum_{\tiny \begin{matrix}k,\ell \in \N \\
k\neq \ell\end{matrix}} a_{k}\overline{a_{\ell}}e^{i(\phi_{k}-\phi_{\ell})/\eps}\|_{W}\|a_{j}\|_{W} \\
&\daw \sum_{\tiny \begin{matrix}k,\ell \in \N \\
k\neq \ell\end{matrix}} \|K*a_{k}\overline{a_{\ell}}e^{i(\phi_{k}-\phi_{\ell})/\eps}\|_{W}
\sum_{j\in \N} \|a_{j}\|_{W}.
\end{aligned}
\]

Let us look more closely to the $W$ norm of the convolution above.
For $b$ sufficiently regular and $\omega\in \R^{d}$, we denote $I^{\eps}(x)= K*(be^{i\omega\cdot x/\eps})$.
Remark that

\[
\begin{aligned}
\chap{I^{\eps}}& = (2\pi)^{d/2} \chap{K}\;\chap{be^{i\omega\cdot x/\eps}}\\
&= (2\pi)^{d/2}\chap{K}\;\chap{b}(\cdot-\frac{\omega}{\eps}).
\end{aligned}
\]

So, if we take the $W$ norm of $I^{\eps}$ we obtain
\[
\begin{aligned}
\|I^{\eps}\|_{W}=&\; \|\chap{I^{\eps}}\|_{L^{1}}\\
=&\;C \|\chap{K}\, \lang\xi - \frac{\omega}{\eps}\rang^{m}\lang\xi - \frac{\omega}{\eps}\rang^{-m} \chap{b}(\cdot -\frac{\omega}{\eps})\|_{L^{1}}\\
\daw &\;C\|\mathcal{K}_{1}\,\lang\xi - \frac{\omega}{\eps}\rang^{m}\lang\xi - \frac{\omega}{\eps}\rang^{-m}\chap{b}(\cdot -\frac{\omega}{\eps})\|_{L^{1}}\\
&+\;C\|\mathcal{K}_{2}\,\lang\xi - \frac{\omega}{\eps}\rang^{m}\lang\xi - \frac{\omega}{\eps}\rang^{-m}\chap{b}(\cdot -\frac{\omega}{\eps})\|_{L^{1}} \\
\daw &\;C\|\mathcal{K}_{1}\|_{L^{1}}\|\lang\xi - \frac{\omega}{\eps}\rang^{m}\chap{b}(\cdot -\frac{\omega}{\eps})\|_{L^{\infty}}
\sup_{|\xi|\daw 1}\lang\xi-\frac{\omega}{\eps}\rang^{-m}\\
&+\;C\|\lang\xi - \frac{\omega}{\eps}\rang^{m}\chap{b}(\cdot -\frac{\omega}{\eps})\|_{L^{1}}
\sup_{|\xi|> 1}\bigg(|\xi|^{\gamma-d}\lang\xi-\frac{\omega}{\eps}\rang^{-m}\bigg).\\
\end{aligned}
\]

By Peetre inequality we have for all $m\daw d-\gamma$ and $|\xi| > 1$

\[
\begin{aligned}
|\xi|^{d-\gamma}\lang\xi - \frac{\omega}{\eps}\rang^{m} \geqslant |\xi|^{d-\gamma}\frac{\lang\frac{\omega}{\eps}\rang^{m}}{\lang\xi\rang^{m}}
\geqslant C\lang\frac{\omega}{\eps}\rang^{m}\geqslant C\eps^{-m},
\end{aligned}
\]
and thus
\[
\sup_{|\xi|> 1}(|\xi|^{\gamma-d}\lang\xi-\frac{\omega}{\eps}\rang^{-m}) \daw C\eps^{m}\daw C\eps^{d-\gamma},
\]
Moreover
\[
\begin{aligned}
\lang\xi -\frac{\omega}{\eps}\rang^{m} \geqslant \frac{\lang\frac{\omega}{\eps}\rang^{m}}{\lang\xi\rang^{m}}
\geqslant \lang\frac{\omega}{\eps}\rang^{m}
\geqslant C \eps^{-m},
\end{aligned}
\]
and
\[
\sup_{|\xi|\daw 1}\lang\xi-\frac{\omega}{\eps}\rang^{-m}\daw C\eps^{d-\gamma}.
\]
So, for $m=d-\gamma$
\[
\begin{aligned}
\label{inegI}
\|I^{\eps}\|_{W} \daw C\bigg(\|\mathcal{K}_{1}\|_{L^{1}} \|\lang\xi\rang^{d-\gamma}\chap{b}\|_{L^{\infty}} + C\|\mathcal{K}_{2}\|_{L^{\infty}}\|\lang\xi\rang\chap{b}\|_{L^{1}}\bigg) |\omega|^{d-\gamma}\eps^{d-\gamma}.
\end{aligned}
\]

Hence, for all $k,\ell \in \N, k\neq \ell$

\[
\begin{aligned}
\|K*a_{k}\overline{a_{\ell}}e^{\frac{\phi_{k}-\phi_{\ell}}{\eps}}\|_{W}
\daw &\;C\|\mathcal{K}_{1}\|_{L^{1}} \|\lang\xi\rang^{d-\gamma}\chap{a_{k}\overline{a_{\ell}}}\|_{L^{\infty}}|\kappa_{k}-\kappa_{\ell}|^{d-\gamma}\eps^{d-\gamma}\\
&+ C\|\mathcal{K}_{2}\|_{L^{\infty}}\|\lang\xi\rang^{d-\gamma}\chap{a_{k}\overline{a_{\ell}}}\|_{L^{1}} |\kappa_{k}-\kappa_{\ell}|^{d-\gamma}\eps^{d-\gamma}\\
\daw &\;C\delta^{\gamma-d}\|\mathcal{K}_{1}\|_{L^{1}} \|\lang\xi\rang^{d-\gamma}\chap{a_{k}\overline{a_{\ell}}}\|_{L^{\infty}}\eps^{d-\gamma}\\
&+\; C\delta^{\gamma-d}\|\mathcal{K}_{2}\|_{L^{\infty}}\|\lang\xi\rang^{d-\gamma}\chap{a_{k}\overline{a_{\ell}}}\|_{L^{1}} \eps^{d-\gamma}.\\
\end{aligned}
\]
Remark that
\[
\begin{aligned}
\|\lang\xi\rang^{d-\gamma}\chap{a_{k}\overline{a_{\ell}}}\|_{L^{\infty}}
&\daw \|\lang\xi\rang^{n}\chap{a_{k}\overline{a_{\ell}}}\|_{L^{\infty}}\\
&\daw C\sum_{|\eta|\daw n} \|\chap{\partial^{\eta} (a_{k}\overline{a_{\ell}})}\|_{L^{\infty}}\\
&\daw C\sum_{|\eta|\daw n} \|\partial^{\eta} (a_{k}\overline{a_{\ell}})\|_{L^{1}}\\
&\daw C\sum_{|\eta|\daw n} \|\partial^{\eta} a_{k}\|_{L^{2}}\sum_{|\eta|\daw n} \|\partial^{\eta} a_{\ell}\|_{L^{2}}\\
&\daw C \|a_{k}\|_{Y} \|a_{\ell}\|_{Y}.
\end{aligned}
\]
Moreover
\[
\begin{aligned}
\|\lang\xi\rang^{d-\gamma}\chap{a_{k}\overline{a_{\ell}}}\|_{L^{1}}
&\daw \|\lang\xi\rang^{n}\chap{a_{k}\overline{a_{\ell}}}\|_{L^{1}}\\
&\daw C\sum_{|\eta|\daw n} \|\chap{\partial^{\eta} (a_{k}\overline{a_{\ell}})}\|_{L^{1}}\\
&\daw C\sum_{|\eta|\daw n} \|\partial^{\eta} (a_{k}\overline{a_{\ell}})\|_{W}\\
&\daw C\sum_{|\eta|\daw n} \|\partial^{\eta} a_{k}\|_{W}\sum_{|\eta|\daw n} \|\partial^{\eta} a_{\ell}\|_{W}\\
&\daw C \|a_{k}\|_{Y} \|a_{\ell}\|_{Y}.
\end{aligned}
\]

Thus we obtain

\[
\begin{aligned}
\|K*a_{k}\overline{a_{\ell}}e^{\frac{\phi_{k}-\phi_{\ell}}{\eps}}\|_{W}
& \daw C\delta^{\gamma-d}(\|\mathcal{K}_{1}\|_{L^{1}}
+ C\|\mathcal{K}_{2}\|_{L^{\infty}})\|a_{k}\|_{Y} \|a_{\ell}\|_{Y}\eps^{d-\gamma}.\\
\end{aligned}
\]

In view of the above inequality, we deduce
\[
\begin{aligned}
\|r^{\eps}\|_{W}
&\daw \sum_{\tiny \begin{matrix}k,\ell \in \N \\
k\neq \ell\end{matrix}} C\delta^{\gamma-d}(\|\mathcal{K}_{1}\|_{L^{1}}
+ C\|\mathcal{K}_{2}\|_{L^{\infty}})\|a_{k}\|_{Y} \|a_{\ell}\|_{Y}\eps^{d-\gamma}
\sum_{j\in \N} \|a_{j}\|_{Y}\\
&\daw C\delta^{\gamma-d}(\|\mathcal{K}_{1}\|_{L^{1}}
+ C\|\mathcal{K}_{2}\|_{L^{\infty}})\sum_{k\in \N} \|a_{k}\|_{Y}\sum_{\ell\in \N} \|a_{\ell}\|_{Y}
\sum_{j\in \N} \|a_{j}\|_{Y}\eps^{d-\gamma}\\
&\daw C\delta^{\gamma-d}(\|\mathcal{K}_{1}\|_{L^{1}}
+ C\|\mathcal{K}_{2}\|_{L^{\infty}})\|a\|_{E}^{3}\eps^{d-\gamma}.
\end{aligned}
\]

To control $L^{2}$ norm of $r^{\eps}$ it suffices to remark that

\[
\begin{aligned}
\|r^{\eps}\|_{L^{2}} &= \|\sum_{j\in \N}(K*\sum_{\tiny \begin{matrix}k,\ell \in \N \\
k\neq \ell\end{matrix}} a_{k}\overline{a_{\ell}}e^{i(\phi_{k}-\phi_{\ell})/\eps})
a_{j}e^{i\frac{\phi_{j}}{\eps}}\|_{L^{2}}\\
&\daw \sum_{j\in \N}\|K*\sum_{\tiny \begin{matrix}k,\ell \in \N \\
k\neq \ell\end{matrix}} a_{k}\overline{a_{\ell}}e^{i(\phi_{k}-\phi_{\ell})/\eps}\|_{L^{\infty}}\|a_{j}\|_{L^{2}} \\
&\daw C\sum_{\tiny \begin{matrix}k,\ell \in \N \\
k\neq \ell\end{matrix}} \|K*a_{k}\overline{a_{\ell}}e^{i(\phi_{k}-\phi_{\ell})/\eps}\|_{W}
\sum_{j\in \N} \|a_{j}\|_{Y}.
\end{aligned}
\]
and in the same way as previously we get

\[
\begin{aligned}
\|r^{\eps}\|_{L^{2}} & \daw C\delta^{\gamma-d}(\|\mathcal{K}_{1}\|_{L^{1}}
+ \|\mathcal{K}_{2}\|_{L^{\infty}})\|a\|_{E}^{3}\eps^{d-\gamma}.
\end{aligned}
\]
Finally
\[
\begin{aligned}
\|r^{\eps}\|_{L^{2}\cap W} & \daw C\delta^{\gamma-d}(\|\mathcal{K}_{1}\|_{L^{1}}
+ \|\mathcal{K}_{2}\|_{L^{\infty}})\|a\|_{E}^{3}\eps^{d-\gamma}.
\end{aligned}
\]

\end{proof}

\section{Justification of the approach}
In this section we show that the solution $u^{\eps}_{app}$	is a
good approximation of the exact solution in the sense of Theorem \ref{mainTh}.  We assume
that assumptions of Theorem \ref{mainTh} are verified. At this stage
we have constructed an approximate solution
\[
u^{\eps}_{app} = \sum_{j\in \N} a_{j}(t,x)e^{i\phi(t,x)/\eps},
\]
in $C([0,\infty[;E(\R^{d}))$ where the corresponding $\phi_{j}$'s are like constructed in \eqref{eikonale} and the profiles
are given by Lemma \ref{lemm_ampl}. Let $T^{\eps}>0$ the time of local existence of the exact solution $u^{\eps}$ to
\eqref{eq:SPultim} given by the fixed point argument.
We consider a fixed $T>0$ and we introduce the error between the exact solution $u^{\eps}$ and the approximate solution $u^{\eps}_{app}$ :
 \[
w^{\eps}=u^{\eps}-u^{\eps}_{app}.
\]
We have in particular
\[
u^{\eps}_{app} \in C([0,T], L^{2}\cap W),
\]
so there exists $R>0$ independent of $\eps$ such that
\[
\|u^{\eps}_{app}(t)\|_{L^{2}\cap W}\daw R, \; \forall t \in [0,T].
\]
Since $u^{\eps} \in C([0,T^{\eps}],L^{2}\cap W)$ and $w^{\eps}(0)=0$, there exists $t^{\eps}$
such that
\begin{equation}
\label{so_long_as_w}
\|w^{\eps}(t)\|_{L^{2}\cap W}\daw R,
\end{equation}
for $t\in [0,t^{\eps}]$. So long as \eqref{so_long_as_w} holds, we infer
\[
\begin{aligned}
\|w^{\eps}(t)\|_{L^{2}\cap W}\daw &\;|\lambda|\int_{0}^{t} \|\left(g(u^{\eps}+w^{\eps})-g(u^{\eps}_{app})\right)(\tau)\|_{L^{2}\cap W}
\; \mathrm{d}\tau \\
&+ |\lambda|\eps \int_{0}^{t} \|Z^{\eps}_{2}(\tau)\|_{L^{2}\cap W} \;\mathrm{d}\tau
+\;|\lambda|\int_{0}^{t} \|R(\tau)\|_{L^{2}\cap W} \; \mathrm{d}\tau\\
\daw &\;C\int_{0}^{t} \|w^{\eps}(\tau)\|_{L^{2}\cap W}\;\mathrm{d}\tau+C\eps \int_{0}^{t} \mathrm{d}\tau
+ C\eps^{d-\gamma}\int_{0}^{t} \mathrm{d}\tau\\
\daw &\;C\int_{0}^{t} \|w^{\eps}(\tau)\|_{L^{2}\cap W}\;\mathrm{d}\tau + C\eps^{\beta}\int_{0}^{t} \mathrm{d}\tau,
\end{aligned}
\]
where we have used Lemma \ref{lemm_W} and Proposition \ref{estim_error}, with $\beta=\min \lbrace 1,d-\gamma\rbrace$.
Gronwall lemma implies that so long as \eqref{so_long_as_w} holds,
\[
\begin{aligned}
\|w^{\eps}(t)\|_{L^{2}\cap W} &\daw C(e^{Ct}-1) \eps^{\beta},
\end{aligned}
\]
for all $t\in [0,t^{\eps}]$. Choosing $\eps\in ]0,\eps_{0}]$ with $\eps_{0}$ sufficiently small,
we see that \eqref{so_long_as_w} remains true for $t\in [0,T]$ and Theorem \ref{mainTh} follows.

\end{flushleft}

\bibliographystyle{elsarticle-num}

							\end{document}